\newtheorem{theorem}{Theorem}[section]
\newtheorem{lemma}[theorem]{Lemma}
\theoremstyle{definition}
\newtheorem{definition}[theorem]{Definition}
\newtheorem{example}[theorem]{Example}
\theoremstyle{remark}
\newtheorem{remark}[theorem]{Remark}
\numberwithin{equation}{section}
\begin{document}

\title{$G$-Algebra Structure on the Higher Order Hochschild Cohomology $H^*_{S^2}(A,A)$}

\author{Samuel Carolus}
\address{Department of Mathematics and Statistics, Bowling Green State University, Bowling Green, OH 43403}
\email{carolus@bgsu.edu}

\author{Mihai D. Staic}
\address{Department of Mathematics and Statistics, Bowling Green State University, Bowling Green, OH 43403 }
\address{Institute of Mathematics of the Romanian Academy, PO.BOX 1-764, RO-70700 Bu\-cha\-rest, Romania.}
\email{mstaic@bgsu.edu}



\subjclass[2010]{Primary  16E40, Secondary 	18D50}


\keywords{Higher Hochschild cohomology, operads, $G$-algebra.}

\begin{abstract} 
We present a deformation theory associated to the higher Hochschild cohomology 
$H_{S^2}^*(A,A)$. We also study a $G$-algebra structure associated to this deformation theory.  
\end{abstract}

\maketitle

 \section{Introduction}

$G$-Algebra structures were introduced by Gerstenhaber in \cite{g1}, where he proved that on the Hochschild cohomology $H^*(A,A)$ one can define a cup product that is  graded commutative and a bracket that satisfies a graded Jacobi identity. Moreover there is a compatibility between the two structures, i.e. the bracket satisfies a graded Leibniz identity with respect to the cup product. Later it was proved in \cite{gv} that the existence of the $G$-algebra structure on $H^*(A,A)$ is a consequence of the existence of a multiplicative operad structure on $C^*(A,A)$. $G$-Algebra structures have been proven to exist in other settings like the exterior algebra of a Lie algebra, differential forms on a Poisson manifold, secondary Hochschild cohomology, etc. 

Higher order Hochschild homology of a commutative algebra $A$ over a simplicial set $X$ was introduced by Pirashvili in \cite{p}; the cohomology version was defined by Ginot in \cite{gi}. One interesting fact is that this (co)homology theory depends only on the homotopy type  of the simplicial set $X$. When $X$ is the simplicial set associated to the sphere $S^1$, one recovers the usual Hochschild cohomology. 

This paper was initially motivated by a question of Bruce Corrigan regarding the relevance of  higher Hochschild cohomology in deformation theory. We prove that the higher order Hochschild cohomology $H^2_{S^2}(A,A)$ controls deformations of morphisms $u:A[[t]]\to A[[t]]$ that satisfy the identity $u(ab)u(c)=u(a)u(bc)$. As usual in these situations, there is an obstruction which is an element in $H^3_{S^2}(A,A)$. A consequence of this deformation theory is the existence of a $G$-algebra structure on $H^*_{S^2}(A,A)$. 
We actually show the existence of a homotopy $G$-algebra structure on $C^*_{S^2}(A,A)$ which induces the cup product and bracket on $H^*_{S^2}(A,A)$. For this we use the language of  multiplicative operads from \cite{gv}, and the description of $H^*_{S^2}(A,M)$ given in \cite{l2}. 

One should notice that in \cite{gi} it was proved that $H_{S^d}(A,A)$ admits $d+1$-algebra structure. It is not clear if there exists a direct connection between that construction and the results from this paper.

\section{Preliminary} In this paper $k$ is a field and $A$ is a commutative $k$-algebra; we denote $\otimes_k$ by $\otimes$. 
\subsection{Higher order Hochschild cohomology}
 We recall from  \cite{gi} the construction of the Higher Hochschild cohomology.  Let $A$ be a commutative $k$-algebra and $M$ a symmetric $A$-bimodule.

Let $V$ be a finite pointed set such that $\vert V\vert=v+1$. We define $\mathcal{H}(A,M)(V)=Hom_k(A^{\otimes v},M)$. 
For $\phi:V\to W$   we define 
$$\mathcal{H}(A,M)(\phi): \mathcal{H}(A,M)(W)\to \mathcal{H}(A,M)(V)$$
determined as follows: if  $f\in \mathcal{H}(A,M)(V)$
then $$\mathcal{H}(A,M)(\phi)(f)(a_1\otimes ...\otimes a_v)=b_0f(b_1\otimes ...\otimes b_{w})$$ 
where $$b_i=\prod_{\{j\in V | j\neq *, \phi (j)=i\}}a_j.$$
Take $X_{\bullet}$ to be a finite pointed simplicial set.  
$$C_{X_\bullet}^n=\mathcal{H}(A,M)(X_n)$$ For each $d_i:X_{n+1}\to X_{n}$ we define $d_i^*=\mathcal{H}(A,M)(d_i):C_{X_{\bullet}}^{n}\to C_{X_{\bullet}}^{n+1}$ and take $\partial_{n}:C^{n}_{X_{\bullet}}\to C^{n+1}_{X_{\bullet}}$ defined as $\partial_n=\sum_{i=0}^{n+1}(-1)^i(d_i)^*$. 

The homology of this complex is denoted by $H^n_{X_{\bullet}}(A,M)$ and is called the higher order Hochschild cohomology group. One  interesting fact is that these groups depend only on the homotopy type of the geometric realization of the simplicial set $X_{\bullet}$.  When $X=S^1$ with the usual simplicial structure one recovers the complex that defines Hochschild cohomology.

 Next we recall from \cite{l2} (see also \cite{cl}) a description of the above complex when  $X=S^2$ with the following simplicial structure. Take $X_n=\{*_n\}\cup \{^a\Delta^b_c\vert a,b,c\in \mathbb{N}, a+b+c=n-2\}$. Define $d_i:X_n\to X_{n-1}$ determined by $d_i(*_n)=*_{n-1}$, $s_i(*_n)=*_{n+1}$ and 
\begin{eqnarray}
d_i(\,^a\Delta^b_c)= \left\{\begin{array}{ll}
  *_{a+b+c+1}& \mbox{ if $a=0$ and $i=0$}\\ 
  ^{a-1}\Delta^b_c & \mbox{ if  $a\neq 0$ and $i\leq a$ }\\
 *_{a+b+c+1}& \mbox{ if $b=0$ and $i=a+1$}\\
 ^{a}\Delta^{b-1}_c & \mbox{ if  $b\neq 0$ and $a<i\leq a+b+1$ }\\
 *_{a+b+c+1}& \mbox{ if $c=0$ and $i=n=a+b+2$}\\
 ^{a}\Delta^b_{c-1} & \mbox{ if  $c\neq 0$ and $i\geq a+b+2$}.\\
 \end{array}\right.\label{f3}
 \end{eqnarray}

\begin{eqnarray}
s_i(\,^a\Delta^b_c)= \left\{\begin{array}{ll} 
  ^{a+1}\Delta^b_c & \mbox{ if  $i\leq a$}\\
 ^{a}\Delta^{b+1}_c & \mbox{ if  $a+1\leq i\leq a+b+1$}\\
 ^{a}\Delta^b_{c+1} & \mbox{ if  $a+b+2\leq i$.}\\
 \end{array}\right.\label{f4}
 \end{eqnarray}

This gives $C_{S^2}^n(A,A)=Hom_k(A^{\otimes\frac{n(n-1)}{2}},A)$.  Just like in \cite{sta} it is convenient to have a tensor matrix notation for an element in $A^{\otimes \frac{s(s-1)}{2}}$. We will consider a $s\times s$  upper triangular matrix that has $1$'s on the diagonal and elements in $A$ above the diagonal. The $1$'s on the diagonal do not play any role except for making our notation more intuitive. 
 \begin{eqnarray}
T=\otimes \begin{pmatrix} 1 & a_{1,2} & a_{1,3}  & \cdots&a_{1,s-1} & a_{1,s}\\ 
& 1 & a_{2,2}  & \cdots &a_{2,s-1}& a_{2,s}\\ 
&  & 1  & \cdots&a_{3,s-1}& a_{2,s}\\ 
& & & \ddots & \vdots & \vdots\\ 
& & &&1& a_{s-1,s}\\ 
& & & && 1 \end{pmatrix}\in A^{\otimes \frac{s(s-1)}{2}}.
\label{matrixT}
\end{eqnarray}
If we identify $^a\Delta^b_c$ with the position $(a+1,a+b+2)$ in the tensor matrix then the differential map $\delta_n^{S^2}:C^n_{S^2}(A,A)\to C^{n+1}_{S^2}(A,A)$ becomes: 
\begin{eqnarray}
&\delta_n^{S^2}(f)(\otimes \begin{pmatrix} 1 & a_{1,2}   & \cdots &a_{1,n}& a_{1,n+1}\\ 
 & 1   & \cdots &a_{2,n}& a_{2,n+1}\\ 
& &  \ddots & \vdots&\vdots\\ 
&  & &  1&a_{n,n+1}\\ 
&  & & & 1 \end{pmatrix})=f(\otimes \begin{pmatrix} 1 & a_{2,3}   & \cdots &a_{2,n}& a_{2,n+1}\\ 
& 1  & \cdots &a_{3,n}& a_{3,n+1}\\
& &  \ddots & \vdots&\vdots\\ 
&  & & 1&a_{n,n+1}\\ 
&  & & &1 \end{pmatrix}){\displaystyle\prod_{j=2}^{n+1}}a_{1,j}+&
\label{deltaS}
\end{eqnarray}
\begin{eqnarray*}
&(-1)^i{\displaystyle\sum_{i=1}^n}a_{i,i+1}f(\otimes\begin{pmatrix} 
1&a_{1,2}&...&a_{1,i-1}&a_{1,i}a_{1,i+1}&a_{1,i+2}&...&a_{1,n}&a_{1,n+1} \\
&1&...&a_{2,i-1}&a_{2,i}a_{2,i+1}&a_{2,i+2}&...&a_{2,n}&a_{2,n+1} \\
& &\ddots &\vdots&\vdots &\vdots&...&\vdots&\vdots \\
&& &1&a_{i-1,i}a_{i-1,i+1}&a_{i-1,i+2}&...&a_{i-1,n}&a_{i-1,n+1}\\
&& &&1&a_{i,i+2}a_{i+1,i+2}&...&a_{i,n}a_{i+1,n}&a_{i,n+1}a_{i+1,n+1} \\
&& &&&1&...&a_{i+2,n}&a_{i+2,n+1} \\
& & &&& &\ddots&\vdots&\vdots \\
&&&&& &&1&a_{n,n+1} \\
&&&&& && &1 \\
\end{pmatrix})+&
\end{eqnarray*}
\begin{eqnarray*}
&(-1)^{n+1}f(\otimes \begin{pmatrix} 1 & a_{1,2} & \cdots  &a_{1,n-1} & a_{1,n}\\ 
& 1  & \cdots &a_{2,n-1}& a_{2,n}\\
& &   \ddots&\vdots& \vdots\\ 
&  & & 1&a_{n-1,n}\\ 
&  & & &1 \end{pmatrix}){\displaystyle\prod_{j=1}^{n}}a_{j,n+1}.&
\end{eqnarray*}
The first and the last term in this sum are self explanatory. For the other terms, we collapse the $(n+1)\times (n+1)$ tensor  matrix along the $i$ and $i+1$ rows and columns to get a $n\times n$ matrix while the element $a_{i,i+1}$ becomes a coefficient in front of $f$. 

\subsection{Non-symmetric Operads}

The existence of a $G$-algebra structure on $H^*(A,A)$ was initially proved in \cite{g1}. Later the same result was obtained in \cite{gv} as a consequence of the homotopy $G$-algebra structure on $C^*(A,A)$. Using the same idea a similar result was proved for secondary Hochschild cohomology in \cite{s3}.  We recall here a few results and definitions concerning operads and $G$-algebras. For more details see \cite{mark}

 \begin{definition}
A non-symmetric unital operad is a sequence of $k$-vector spaces $\mathcal{P}=\{\mathcal{P}_n\}_{n\geq1}$ together with $k$-linear maps $$\circ_i:\mathcal{P}_n\otimes \mathcal{P}_m\to\mathcal{P}_{n+m-1},$$ for each $n,m\geq 1$, and $1\leq i\leq n$, and a distinguished element $\mathbf{1}\in \mathcal{P}_1$ such that the following relations hold for all $x\in \mathcal{P}_n, y\in \mathcal{P}_m$, and $z\in\mathcal{P}_k$:
\begin{gather}
   (x\circ_jz)\circ_iy=(x\circ_iy)\circ_{m+j-1}z,  \text{ if } 1\leq i<j\leq n\label{Odef1}\\ 
  (x\circ_iy)\circ_{i+j-1}z=x\circ_i(y\circ_jz),  \text{ if }  1\leq i \leq n \text{ and } 1\leq j \leq m\label{Odef2}\\
  x\circ_i\textbf{1} =x  \text{ if } 1\leq i \leq n\label{Odef3}\\
  \textbf{1} \circ_1x=x\label{Odef4}
 \end{gather}

\end{definition}

Given a non-symmetric operad  $\mathcal{P}=\{\mathcal{P}_n\}_{n\geq1}$,  one defines
$\circ:\mathcal{P}_n\otimes \mathcal{P}_m \to \mathcal{P}_{n+m-1}$
$$x\circ y=\sum_{i=1}^n(-1)^{(i-1)(m-1)}x\circ_i y.$$
One can show that $[x,y]=x\circ y-(-1)^{(n-1)(m-1)}y\circ x$ defines a graded Lie algebra on 
$\oplus_{i\geq 1} \mathcal{P}_i$ (see for example \cite{g1}).  For $x\in \mathcal{P}_n$, we set $|x|=n-1$.

A multiplication on an operad $\mathcal{P}$ is an element ${\mathfrak m}\in {\mathcal P}_2$ such that  ${\mathfrak m}\circ {\mathfrak m}=0$. The following was proved in \cite{gv}. 
\begin{theorem}
A multiplication on an operad ${\mathcal P}$ induces a homotopy $G$-algebra structure on 
$\oplus_{n\geq 1} {\mathcal P}_n$ where the cup product is 
\begin{eqnarray}\label{cdot}
x\cdot y:=(-1)^{|x|+1}{\mathfrak m}\{x,y\},
\end{eqnarray}  and the differential 
\begin{eqnarray}\label{diff} dx={\mathfrak m}\circ x-(-1)^{|x|}x\circ {\mathfrak m}. \end{eqnarray} One can see that $d^2=0$ and $\mathrm{deg}(d)=1$. \label{th1}
\end{theorem}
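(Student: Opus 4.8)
The plan is to reduce everything to the \emph{brace operations} that one can attach to any non-symmetric operad, and then to use the single relation $\mathfrak{m}\circ\mathfrak{m}=0$ to pin down the pieces of the structure that involve the multiplication. For $x\in\mathcal{P}_n$ and homogeneous $y_1,\dots,y_k$ I would set
$$x\{y_1,\dots,y_k\}=\sum(\cdots(x\circ_{i_1}y_1)\circ_{i_2}y_2\cdots)\circ_{i_k}y_k,$$
the sum running over all ways of plugging $y_1,\dots,y_k$ into distinct, non-overlapping inputs of $x$ in the given order, each term carrying the appropriate Koszul sign; for $k=1$ this is exactly $x\circ y$, so $[x,y]=x\{y\}-(-1)^{|x||y|}y\{x\}$ recovers the bracket recalled before the theorem. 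Using only the associativity axioms \eqref{Odef1} and \eqref{Odef2} one verifies the higher pre-Jacobi identity expressing $x\{y_1,\dots,y_p\}\{z_1,\dots,z_q\}$ as a signed sum of terms $x\{\dots,y_i\{\dots\},\dots\}$; this makes $\bigoplus_n\mathcal{P}_n$ a brace algebra and, as a special case, $[\,,\,]$ a graded Lie bracket. None of this step uses $\mathfrak{m}$.

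Next I would introduce the multiplication. Writing $x\cdot y:=(-1)^{|x|+1}\mathfrak{m}\{x,y\}$ and $dx:=\mathfrak{m}\circ x-(-1)^{|x|}x\circ\mathfrak{m}$, note that since $|\mathfrak{m}|=1$ the differential is simply $dx=[\mathfrak{m},x]$, and it raises the degree $|x|=n-1$ by one, so $\mathrm{deg}(d)=1$. For $d^{2}=0$ I would either invoke the graded Jacobi identity, which gives $2[\mathfrak{m},[\mathfrak{m},x]]=[[\mathfrak{m},\mathfrak{m}],x]$ together with $[\mathfrak{m},\mathfrak{m}]=2\,\mathfrak{m}\circ\mathfrak{m}=0$, or---to avoid dividing by $2$---expand $d^{2}x$ directly in terms of the $\circ_i$ via \eqref{Odef1}--\eqref{Odef2} and observe that every surviving term is a component of $\mathfrak{m}\circ\mathfrak{m}$ and hence vanishes. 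Associativity of $\cdot$ is handled the same way: unfolding $(x\cdot y)\cdot z$ and $x\cdot(y\cdot z)$ through the distributivity of braces over $\cdot$, the difference is again a multiple of $\mathfrak{m}\circ\mathfrak{m}$.

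It then remains to record the two compatibilities that upgrade this to a \emph{homotopy} $G$-algebra: that $d$ is a derivation of $\cdot$ and, more generally, of each brace (the Leibniz-type formula for $d\big(x\{y_1,\dots,y_k\}\big)$, obtained by applying $[\mathfrak{m},-]$ to the defining sum and re-sorting the resulting compositions), and that $\cdot$ is graded-commutative up to an explicit homotopy, $x\cdot y-(-1)^{(|x|+1)(|y|+1)}y\cdot x=\pm\,d\big(x\{y\}\big)\pm(dx)\{y\}\pm x\{dy\}$. Both are formal consequences of \eqref{Odef1}--\eqref{Odef2} and $\mathfrak{m}\circ\mathfrak{m}=0$; on cohomology the homotopy terms die and one is left with the graded-commutative cup product and the Gerstenhaber bracket. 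This is precisely the argument of \cite{gv}, to which I would refer for the full bookkeeping.

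I expect the only real difficulty to be combinatorial, namely keeping the Koszul signs straight in the higher pre-Jacobi identity and in the Leibniz rule for $d$ on iterated braces; conceptually there is nothing beyond the operad axioms, and every contribution that mentions $\mathfrak{m}$ simplifies the moment one uses $\mathfrak{m}\circ\mathfrak{m}=0$.
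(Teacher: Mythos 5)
Your proposal is correct and follows essentially the same route as the paper, which does not prove this theorem itself but defers entirely to Gerstenhaber--Voronov \cite{gv}; your sketch (braces from the $\circ_i$, the pre-Jacobi identity from axioms (\ref{Odef1})--(\ref{Odef2}), $d=[\mathfrak{m},\cdot]$ with $d^2=0$ from $\mathfrak{m}\circ\mathfrak{m}=0$, and the homotopy identities (\ref{jacobi})--(\ref{gradcom})) is a faithful reconstruction of that argument. Your remark that one should expand $d^2x$ directly rather than divide by $2$ in $[[\mathfrak{m},\mathfrak{m}],x]$ is a sensible precaution, since the paper allows $k$ to have characteristic $2$.
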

We will need the next two identities proved in \cite{gv}. 
\begin{eqnarray}
\begin{split}
[x,y\cdot z]-[x,y]\cdot z-(-1)^{|x|(|y|+1)}y\cdot[x,z]=(-1)^{|x|+|y|+1}(d(x\{y,z\})-\\
d(x)\{y,z\}-(-1)^{|x|}x\{dy,z\}-(-1)^{|x|+|y|}x\{y,dz\}), \label{jacobi}
\end{split}
\end{eqnarray}
\begin{eqnarray}
x\cdot y-(-1)^{(|x|+1)(|y|+1)}y\cdot x=(-1)^{|x|}(d(x\circ y)-dx\circ y-(-1)^{|x|}x\circ dy).\label{gradcom}
\end{eqnarray}

\section{A deformation theory associated to $H_{S^2}^*(A,A)$}

It is well know that Hochschild cohomology $H^*(A,A)$ gives information about deformations of the algebra structure on $A[[t]]$. Similar results are known for the Hochschild cohomology $H^*(A,M)$ \cite{h}, secondary cohomology \cite{sta}, cohomology of oriented algebras \cite{kp}, etc. 

In this section, we address a question asked by Bruce Corrigan regarding the importance of higher Hochschild cohomology in deformation theory. More precisely, we describe  a deformation theory that is controlled by $H_{S^2}^*(A,A)$.

Consider a  map $u:A[[t]]\to A[[t]]$ that is $k[[t]]$ linear and is determined by 
$$u(a)=a+u_1(a)t+u_2(a)t^2+...\in A[[t]],$$ 
where for every $i\geq 1$ we have that $u_i:A\to A$ is $k$-linear. Suppose that the map $u$ satisfies the identity  
\begin{eqnarray}
u(ab)u(c)=u(a)u(bc). \label{eq1}
\end{eqnarray}
Then we must have the following identity:
\begin{eqnarray*}
(ab)c+(abu_1(c)+u_1(ab)c)t+(abu_2(c)+u_1(ab)u_1(c)+u_2(ab)c)t^2+...=\\
a(bc)+(au_1(bc)+u_1(a)bc)t+(au_2(bc)+u_1(a)u_1(bc)+u_2(a)bc)t^2+....
\end{eqnarray*}
This means that in order for $u$ to satisfies equation (\ref{eq1}) $mod\; t^2$ we must have $abu_1(c)-au_1(bc)+cu_1(ab)-u_1(a)bc=0$ or equivalently $\delta_2^{S^2}(u_1)=0$. Also, in order to have (\ref{eq1}) $mod\; t^3$ we must have $abu_2(c)+u_1(ab)u_1(c)+u_2(ab)c=au_2(bc)+u_1(a)u_1(bc)+u_2(a)bc$, or equivalently 
$\delta_2^{S^2}(u_2)=u_1\circ u_1$.  Where for $f,g:A\to A$ we define $f\circ g:A^{\otimes 3}\to A$ determined by 
$$(f\circ g) \left(\displaystyle\otimes
\left(
\begin{array}{ccc}
1& a& b\\
& 1&c\\
&&1
\end{array} \right)
\right)=f(bc)g(a)-f(ab)g(c).$$
More generally,  for every $n\geq 1$ in order to have the identity (\ref{eq1}) $mod\;t^{n+1}$ the following identity must be satisfied 
$$\delta_2^{S^2}(u_n)=u_{n-1}\circ u_1+u_{n-2}\circ u_2+...+u_1\circ u_{n-1}.$$ 
To summarize, we have the following result
\begin{theorem} Let $A$ be a commutative $k$-algebra and $u:A[[t]]\to A[[t]]$ determined by 
$$u(a)=a+u_1(a)t+u_2(a)t^2+...\in A[[t]].$$ 
(i) If $u$ satisfies equation (\ref{eq1}) $mod\;t^{2}$  then $u_1\in Z^2_{S^2}(A,A)=H^2_{S^2}(A,A)$. \\
(ii) Suppose that $u$ satisfies equation (\ref{eq1}) $mod\;t^{n+1}$. Then we can extend $u$ so that it satisfies equation (\ref{eq1}) $mod\;t^{n+2}$ if and only if $$u_{n}\circ u_1+u_{n-1}\circ u_2+...+u_1\circ u_{n}=0\in H^3_{S^2}(A,A).$$
\end{theorem}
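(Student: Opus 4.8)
The whole theorem is an exercise in expanding the constraint (\ref{eq1}) as a power series in $t$, and I would build the write-up around the single identity that has essentially already been extracted in the discussion above: with the convention $u_0=\mathrm{id}_A$, the coefficient of $t^m$ in $u(ab)u(c)-u(a)u(bc)$ equals $\delta_2^{S^2}(u_m)-O_m$, where $O_m:=\sum_{i+j=m,\ i,j\ge 1}u_i\circ u_j\in C^3_{S^2}(A,A)$ (so $O_1=0$) and $\circ$ is the bilinear operation $C^2_{S^2}(A,A)\times C^2_{S^2}(A,A)\to C^3_{S^2}(A,A)$ introduced before the theorem. Establishing this identity is just a matter of collecting terms, keeping track of the identification $(a,b,c)\leftrightarrow(a_{1,2},a_{1,3},a_{2,3})$ of tensor-matrix entries and using the commutativity of $A$. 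Granting it, part (i) is immediate: the statement that $u$ satisfies (\ref{eq1}) $\mathrm{mod}\ t^{2}$ says precisely $\delta_2^{S^2}(u_1)=O_1=0$, so $u_1\in Z^2_{S^2}(A,A)$.

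For part (ii), write $O_{n+1}=u_n\circ u_1+\cdots+u_1\circ u_n$; note it involves only $u_1,\dots,u_n$, which are already fixed. By the identity above, extending $u$ so that it satisfies (\ref{eq1}) $\mathrm{mod}\ t^{n+2}$ is the same as choosing $u_{n+1}\in C^2_{S^2}(A,A)$ with $\delta_2^{S^2}(u_{n+1})=O_{n+1}$, i.e. asking that $O_{n+1}$ lie in the image of $\delta_2^{S^2}$. So the ``only if'' direction is automatic, and the ``if'' direction needs exactly one further input, namely that $O_{n+1}$ is a $3$-cocycle; granting that, $O_{n+1}\in\mathrm{im}(\delta_2^{S^2})$ is equivalent to $O_{n+1}=0$ in $H^3_{S^2}(A,A)$, and one takes $u_{n+1}$ to be any primitive of $O_{n+1}$.

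Thus the one substantive point is the claim that, under the standing hypothesis $\delta_2^{S^2}(u_k)=O_k$ for $1\le k\le n$, the cochain $O_{n+1}$ is closed --- the analogue of the classical fact that obstructions to deforming an associative multiplication are cocycles. I would prove $\delta_3^{S^2}(O_{n+1})=0$ by the standard device: extend $u$ trivially by $u_{n+1}=u_{n+2}=\cdots=0$, set $F(a,b,c)=u(ab)u(c)-u(a)u(bc)\in A[[t]]$, and observe that the hypothesis gives $F_m=0$ for $m\le n$ and $F\equiv -O_{n+1}\,t^{n+1}\ (\mathrm{mod}\ t^{n+2})$, so that $\delta_3^{S^2}(O_{n+1})$ is, up to sign, the coefficient of $t^{n+1}$ in $\delta_3^{S^2}(F)$. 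One then expands $\delta_3^{S^2}(F)$ out of (\ref{deltaS}) with $n=3$, substitutes $F(x,y,z)=u(xy)u(z)-u(x)u(yz)$, and simplifies in the commutative ring $A[[t]]$: every monomial in the values of $u$ that appears cancels modulo $t^{n+2}$, the cancellations being produced by the lower-order relations $F_m=0$ for $m\le n$, exactly as in Gerstenhaber's argument they are produced by the pre-Lie identity together with the lower associativity constraints. Equivalently --- anticipating the next section --- once $C^*_{S^2}(A,A)$ is known to carry a multiplicative operad structure in the sense of \cite{gv}, the constraint (\ref{eq1}) becomes a Maurer--Cartan-type equation for $\sum_k u_k\,t^k$, the map $\delta^{S^2}$ is the bracket with the multiplication, and the cocycle property of $O_{n+1}$ follows formally from the graded Jacobi identity as in \cite{g1}. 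This cocycle verification is the only place real work is needed, and it is more of a bookkeeping burden than its Hochschild counterpart because $\delta^{S^2}$ contracts \emph{pairs} of rows and columns of the tensor matrix rather than adjacent slots of a list; parts (i) and (ii) are then formal.
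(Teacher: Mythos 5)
Your proposal is correct and, for its main thrust, follows exactly the route the paper takes: the paper's entire proof is the sentence ``It follows from the above discussion,'' where the discussion is precisely the power-series expansion you codify as the identity ``coefficient of $t^m$ in $u(ab)u(c)-u(a)u(bc)$ equals $\delta_2^{S^2}(u_m)-O_m$'' (your sign bookkeeping checks out, using the symmetry of the index set $\{i+j=m\}$ and commutativity of $A$). Where you go beyond the paper is in isolating the one point its proof silently omits: for the assertion ``$u_n\circ u_1+\cdots+u_1\circ u_n=0\in H^3_{S^2}(A,A)$'' to be well-posed, and for the ``if'' direction (obstruction class vanishes $\Rightarrow$ a primitive $u_{n+1}$ exists) to go through, one must know that $O_{n+1}$ is a $3$-cocycle; the paper never verifies $\delta_3^{S^2}(O_{n+1})=0$, whereas you correctly flag it as the only substantive step and sketch the standard Gerstenhaber-style argument (extend by $u_{n+1}=0$, apply $\delta_3^{S^2}$ to $F=u(ab)u(c)-u(a)u(bc)$, and cancel using the lower-order relations $F_m=0$), together with the cleaner operadic alternative via the graded Jacobi identity once the multiplicative operad structure of the next section is available. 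The only caveat is that your cocycle verification remains a sketch --- the claimed cancellation ``every monomial cancels modulo $t^{n+2}$'' would need to be carried out against the specific formula (\ref{deltaS}), which contracts pairs of rows and columns rather than adjacent tensor slots --- but the argument pattern is standard and there is no reason to doubt it. (Neither you nor the paper justifies the identification $Z^2_{S^2}(A,A)=H^2_{S^2}(A,A)$ appearing in part (i), but that is an issue with the statement rather than with your argument.)
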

\begin{proof}
It follows from the above discussion. 
\end{proof}

\begin{remark} Notice that if $u:A\to A$ is a $k$ linear map that satisfies the identity $u(ab)u(c)=u(a)u(bc)$ and $u(1)\in U(A)$ then $v:A\to A$ $v(a)=u(a)u(1)^{-1}$ is a morphism of $k$-algebras.
\end{remark} 

\begin{example}
Take $u:k[X][[t]]\to k[X][[t]]$ to be an algebra morphism determined by $u(X)=X+tP(X)$ where $P(X)\in k[X]$. One can see that the corresponding two cocycle $u_1\in H^2_{S^2}(k[X],k[X])\subseteq Hom_k(k[X],k[X])$ is given by  $$u_1(A(X))=\frac{\partial A }{\partial X}(X)P(X).$$ 
\end{example}

\begin{example}
Take $u:k[X,Y][[t]]\to k[X,Y][[t]]$ to be an algebra morphism determined by $u(X)=X+tP(X,Y)$ and $u(Y)=Y+tQ(X,Y)$, where $P(X,Y)$ and $Q(X,Y)\in k[X,Y]$. One can see that the corresponding two cocycle $u_1\in H^2_{S^2}(k[X,Y],k[X,Y])\subseteq Hom_k(k[X,Y],k[X,Y])$ is given by  $$u_1(A(X,Y))=\frac{\partial A }{\partial X}(X,Y)P(X,Y)+\frac{\partial A}{\partial Y}(X,Y)Q(X,Y).$$ 
\end{example}

\section{G-algebra structure on $H_{S^2}^*(A,A)$}

In this section, we show the existence of a non-symmetric operad structure on $C^*_{S^2}(A,A)$.  This in turn gives a $G$-algebra structure at the level of higher order Hochschild cohomology over $S^2$.

We need to set a few notations.   Consider an element $T\in A^{\otimes \frac{s(s-1)}{2}}$ as in (\ref{matrixT}). It is convenient to split it into sub-tensors. Denote the $m$ by $n$ rectangular sub-tensor starting in position $(i,j)$ by
$$R_{i,j}^{m,n}=\otimes\begin{pmatrix}  a_{i,j} & a_{i,j+1}  & \cdots & a_{i,j+n-1}\\ a_{i+1,j} & a_{i+1,j+1} & \cdots & a_{i+1,j+n-1}\\ \vdots &\vdots & \ddots & \vdots\\ a_{i+m-1,j} & a_{i+m-1,j+1} & \cdots & a_{i+m-1,j+n-1} \end{pmatrix}.$$
 
Denote the triangular sub-tensor of dimension $m$ starting at position $(i,i)$ by $$T_i^m=\otimes\begin{pmatrix} 1 & a_{i,i+1} & a_{i,i+2}  & \cdots & a_{i,i+m-1}\\ & 1 & a_{i+1,i+3}  & \cdots & a_{i+1,i+m-1}\\ &  & \ddots & \dots & \vdots\\ & & &1& a_{i+m-2,i+m-1}\\ & & & & 1 \end{pmatrix}.$$ Notice that the sub tensors $R_{i,m}^{j,n}$ and $T_i^m$ do not make sense by themselves. They are still part of the initial tensor, this is just a convenient way to refer to certain positions in the tensor element.  


Next we define some operations on sub tensors.
\begin{definition}
Define ${\bf H}:A^{\otimes mn}\rightarrow A^{\otimes m}$ and ${\bf V}:A^{\otimes mn}\rightarrow A^{\otimes n}$ determined by  $${\bf H}(R_{i,j}^{m,n})=\otimes\begin{pmatrix} &\prod_{t=j}^{j+n-1} a_{i,t}& \\[6pt] &\prod_{t=j}^{j+n-1} a_{i+1,t}&\\[6pt] &\vdots & \\[6pt] &\prod_{t=j}^{j+n-1} a_{i+m-1,t}&\end{pmatrix},$$ and 
$${\bf V}(R_{i,j}^{m,n})=\otimes\begin{pmatrix} \prod_{t=i}^{i+m-1}a_{t,j}  & \prod_{t=i}^{i+m-1} a_{t,j+1} & \cdots & \prod_{t=i}^{i+m-1} a_{t,j+n-1}\end{pmatrix}.$$
\end{definition}
That is, ${\bf H}$ turns a rectangular sub-tensor into a column sub-tensor by multiplying along each row, and ${\bf V}$ turns a rectangular sub-tensor into a row sub-tensor by multiplying along each column.  Notice that applying ${\bf H}$ and ${\bf V}$ in succession in either order to a rectangle yields nothing but the product of all the entries of the rectangle.




\begin{definition}
Take $f\in C^n_{S^2}(A,A)$ and $g\in C^m_{S^2}(A,A)$.  We define $f\circ_ig\in C^{n+m-1}_{S^2}(A,A)$ determined by
\begin{equation}
 (f\circ_ig)\begin{pmatrix} T_1^{i-1} & R_{1,i}^{i-1,m} & R_{1,i+m}^{i-1,n-i} \\[6pt] 
 & T_i^{m} & R_{i,i+m}^{m,n-i} \\[6pt] 
 & & T_{i+m}^{n-i}\label{circlei}\end{pmatrix}=
f\begin{pmatrix} T_1^{i-1} & {\bf H}(R_{1,i}^{i-1,m}) & R_{1,i+m}^{i-1,n-i} \\[6pt] 
& 1 & {\bf V}(R_{i,i+m}^{m,n-i}) \\[6pt] 
& & T_{i+m}^{n-i}\end{pmatrix} 
g(T_i^{m}).
\end{equation}
\end{definition}


\begin{lemma}
Let $C^n_{S^2}(A,A)=Hom_k(A^{\otimes \frac{n(n-1)}{2}},A)$ for $n\geq 1$. Take $\mathbf{1}\in C^1_{S^2}(A,A)=Hom_k(k,A)$ to be the map $\alpha\mapsto \alpha \cdot1_A$ for $\alpha\in k$. Then the maps $\circ_i$ from (\ref{circlei})  define a non-symmetric unital operad structure on $\{C^n_{S^2}(A,A)\}_{n\geq1}$.  
\end{lemma}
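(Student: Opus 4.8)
The plan is to verify the four defining relations (\ref{Odef1})--(\ref{Odef4}) directly from formula (\ref{circlei}). The only genuine content is bookkeeping of positions in the tensor matrix: the algebraic input consists solely of the associativity and commutativity of the multiplication of $A$, together with the observation recorded after the definition of ${\bf H}$ and ${\bf V}$, namely that applying ${\bf H}$ and ${\bf V}$ to a rectangular sub-tensor in either order returns the product of all of its entries.

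\textit{Unit relations.} I would dispose of (\ref{Odef3}) and (\ref{Odef4}) first. If $g=\mathbf{1}\in C^1_{S^2}(A,A)$ then $m=1$: the triangular block $T_i^1$ is empty and $g(T_i^1)=1_A$, the rectangle $R_{1,i}^{i-1,1}$ is a single column so ${\bf H}$ acts as the identity on it, and $R_{i,i+1}^{1,n-i}$ is a single row so ${\bf V}$ acts as the identity on it. Substituting into (\ref{circlei}), the three rectangles and the central $1$ reassemble exactly the original tensor matrix, whence $f\circ_i\mathbf{1}=f$. For (\ref{Odef4}), with $f=\mathbf{1}$ one has $n=1$ and $i=1$, every sub-tensor carrying an index $i-1=0$ or $n-i=0$ is empty, the argument of $\mathbf{1}$ is $1_k$, and $(\mathbf{1}\circ_1 g)(T_1^m)=1_A\cdot g(T_1^m)=g(T_1^m)$.

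\textit{Associativity relations.} For (\ref{Odef1}) and (\ref{Odef2}) I would fix $f\in C^n_{S^2}(A,A)$, $g\in C^m_{S^2}(A,A)$ and $h\in C^\ell_{S^2}(A,A)$ --- the elements denoted $x,y,z$ in those relations --- and evaluate both sides on a generic tensor matrix $T\in A^{\otimes N(N-1)/2}$ with $N=n+m+\ell-2$. In every case the outcome has the shape $f(T^{(f)})\,g(T^{(g)})\,h(T^{(h)})$, where $T^{(f)},T^{(g)},T^{(h)}$ are tensor matrices of sizes $n,m,\ell$, each entry of which is a product of a prescribed multiset of entries of $T$. The verification then reduces to checking that on the two sides of the identity these three arguments are literally equal: that the corresponding multisets of entries of $T$ coincide, and that the products, reordered if necessary by commutativity of $A$, agree. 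For the nested relation (\ref{Odef2}) one expands $f\circ_i(g\circ_j h)$ by first subdividing the $g$-block $T_i^{m+\ell-1}$ at its $j$-th slot, and expands $(f\circ_i g)\circ_{i+j-1}h$ by first collapsing the $h$-block $T_{i+j-1}^\ell$ sitting inside $T$; since one collapsed region is nested inside the other, associativity of the product of $A$ shows that contracting the surrounding rectangles in two steps or in one yields the same $T^{(f)}$ and $T^{(g)}$, while $T^{(h)}=T_{i+j-1}^\ell$ is visibly the same on both sides. For the disjoint relation (\ref{Odef1}), with $1\le i<j\le n$, on both sides the $g$-block occupies positions $i,\dots,i+m-1$ and the $h$-block occupies positions $m+j-1,\dots,m+j+\ell-2$ of $T$; the only overlap of the two contractions is the rectangle cut out by the rows of the $g$-block and the columns of the $h$-block, and over that rectangle the composite of the relevant ${\bf H}$'s and ${\bf V}$'s equals the product of all its entries regardless of the order in which $g$ and $h$ are inserted, which yields (\ref{Odef1}); the index shift $j\mapsto m+j-1$ is exactly the one recorded there.

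I expect the main obstacle to be organizational rather than conceptual: one must fix, once and for all, the dictionary between the positions of $T$ and the positions of the intermediate matrix produced by a first contraction, and then confirm in the nested case that the entries of $T^{(g)}$ obtained by contracting the $h$-rectangles inside $T_i^{m+\ell-1}$ coincide with the entries of $g$'s argument that arise when $(f\circ_i g)$ is applied to the once-contracted matrix. Once this dictionary is in place, each relation reduces to an equality of products of elements of $A$ and follows from associativity and commutativity. Combining the unit and associativity verifications shows that $\bigl(\{C^n_{S^2}(A,A)\}_{n\ge 1},\,\circ_i,\,\mathbf{1}\bigr)$ is a non-symmetric unital operad.
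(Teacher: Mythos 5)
Your proposal is correct and follows essentially the same route as the paper: verify the unit axioms by noting the degenerate blocks collapse, and verify (\ref{Odef1}) and (\ref{Odef2}) by writing a generic tensor matrix in block form, evaluating both sides, and invoking commutativity of $A$ together with the fact that ${\bf H}$ and ${\bf V}$ applied to a rectangle in either order give the product of all its entries. The paper simply carries out the block bookkeeping you describe explicitly.
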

\begin{proof}
The identities (\ref{Odef3}) and (\ref{Odef4}) involving the distinguished element $\mathbf{1}$ are straightforward. Indeed, for $f\in C^n_{S^2}(A,A)$ we have
$$(\mathbf{1}\circ_1f)(\otimes\begin{pmatrix} 
1 & a_{1,2} & a_{1,3}  & \cdots & a_{1,n}\\ 
& 1 & a_{2,2}  & \cdots & a_{2,n}\\ 
&  & \ddots & & \vdots\\ 
& & &1& a_{n-1,n}\\ 
& & & & 1 \end{pmatrix})=\mathbf{1}(1)f(\otimes\begin{pmatrix} 
1 & a_{1,2} & a_{1,3}  & \cdots & a_{1,n}\\ 
& 1 & a_{2,2}  & \cdots & a_{2,n}\\ 
&  & \ddots & & \vdots\\ & & &1& a_{n-1,n}\\ 
& & & & 1 \end{pmatrix})=$$
$$ f(\otimes\begin{pmatrix} 
1 & a_{1,2} & a_{1,3}  & \cdots & a_{1,n}\\ 
& 1 & a_{2,2}  & \cdots & a_{2,n}\\ 
&  & \ddots & & \vdots\\ 
& & &1& a_{n-1,n}\\ 
& & & & 1 \end{pmatrix}),$$ so $\mathbf{1}\circ_1f=f$

The identity (\ref{Odef4}) can be shown in a similar manner.


Next, take $f\in C^n_{S^2}(A,A)$, $g\in C^m_{S^2}(A,A)$, $h\in C^p_{S^2}(A,A)$, and let  $1\leq i<j\leq n.$ We want to check identity (\ref{Odef1}), or $(f\circ_jh)\circ_ig=(f\circ_ig)\circ_{m+j-1}h\in C^{n+m+p-2}_{S^2}(A,A)$. Take a general element $$T=\otimes\begin{pmatrix} 1 & a_{1,2} & a_{1,3}  & \cdots & a_{1,n+m+p-2}\\ 
& 1 & a_{2,2}  & \cdots & a_{2,n+m+p-2}\\
&  & \ddots & & \vdots\\ 
& & &1& a_{n+m+p-3,n+m+p-2}\\ 
& & & & 1 \end{pmatrix},$$
and write it as  
$$T=\begin{pmatrix} T_1^{i-1} & R_{1,i}^{i-1,m} & R_{1,i+m}^{i-1,j-i-1}& R_{1,m+j-1}^{i-1,p}& R_{1,m+j+p-1}^{i-1,n-j} \\[6pt] 
& T_i^m & R_{i,i+m}^{m,j-i-1}& R_{i,m+j-1}^{m,p} & R_{i,m+j+p-1}^{m,n-j}\\[6pt] 
& & T_{i+m}^{j-i-1} & R_{i+m,m+j-1}^{j-i-1,p} & R_{i+m,m+j+p-1 }^{j-i-1,n-j}\\[6pt]
& & & T_{m+j-1}^p & R_{m+j-1,m+j+p-1}^{p,n-j }\\[6pt] 
& & & & T_{m+j+p-1}^{n-j}\end{pmatrix}.$$ 

Since $i<j$, 
$$((f\circ_jh)\circ_ig)(T)=$$
 $$(f\circ_jh)\begin{pmatrix} T_1^{i-1} & {\bf H}(R_{1,i}^{i-1,m}) & R_{1,i+m}^{i-1,j-i-1}& R_{1,m+j-1}^{i-1,p}& R_{1,m+j+p-1}^{i-1,n-j} \\[6pt] 
 & 1 & {\bf V}(R_{i,i+m}^{m,j-i-1}) & {\bf V}(R_{i,m+j-1}^{m,p}) & {\bf V}(R_{i,m+j+p-1}^{m,n-j})\\[6pt] & & T_{i+m}^{j-i-1} & R_{i+m,m+j-1}^{j-i-1,p} & R_{i+m,m+j+p-1 }^{j-i-1,n-j}\\[6pt]
 & & & T_{m+j-1}^p & R_{m+j-1,m+j+p-1}^{p,n-j }\\[6pt] 
 & & & & T_{m+j+p-1}^{n-j}\end{pmatrix} g(T_i^{m})=$$
$$f\begin{pmatrix} T_1^{i-1} & {\bf H}(R_{1,i}^{i-1,m}) & R_{1,i+m}^{i-1,j-i-1}& {\bf H}(R_{1,m+j-1}^{i-1,p})& R_{1,m+j+p-1}^{i-1,n-j} \\[6pt] 
& 1 & {\bf V}(R_{i,i+m}^{m,j-i-1}) & {\bf H}({\bf V}(R_{i,m+j-1}^{m,p})) & {\bf V}(R_{i,m+j+p-1}^{m,n-j})\\[6pt] 
& & T_{i+m}^{j-i-1} & {\bf H}(R_{i+m,m+j-1}^{j-i-1,p}) & R_{i+m,m+j+p-1 }^{j-i-1,n-j}\\[6pt]
& & & 1 & {\bf V}(R_{m+j-1,m+j+p-1}^{p,n-j})\\[6pt] 
& & & & T_{m+j+p-1}^{n-j}\end{pmatrix} h(T_{m+j-1}^p) g(T_i^{m}).$$

On the other hand, $$((f\circ_ig)\circ_{m+j-1}h)(T)=$$
$$(f\circ_ig)\begin{pmatrix} T_1^{i-1} & R_{1,i}^{i-1,m} & R_{1,i+m}^{i-1,j-i-1} & {\bf H}(R_{1,m+j-1}^{i-1,p}) & R_{1,m+j+p-1}^{i-1,n-j} \\[6pt] 
& T_i^m & R_{i,i+m}^{m,j-i-1}& {\bf H}(R_{i,m+j-1}^{m,p}) & R_{i,m+j+p-1}^{m,n-j}\\[6pt] 
& & T_{i+m}^{j-i-1} & {\bf H}(R_{i+m,m+j-1}^{j-i-1,p}) & R_{i+m,m+j+p-1 }^{j-i-1,n-j}\\[6pt]
& & & 1 & {\bf V}(R_{m+j-1,m+j+p-1}^{p,n-j})\\[6pt] 
& & & & T_{m+j+p-1}^{n-j}\end{pmatrix}h(T_{m+j-1}^p)=$$
$$f\begin{pmatrix} T_1^{i-1} & {\bf H}(R_{1,i}^{i-1,m}) & R_{1,i+m}^{i-1,j-i-1} & {\bf H}(R_{1,m+j-1}^{i-1,p}) & R_{1,m+j+p-1}^{i-1,n-j} \\[6pt] 
& 1 & {\bf V}(R_{i,i+m}^{m,j-i-1}) & {\bf V}({\bf H}(R_{i,m+j-1}^{m,p})) & {\bf V}(R_{i,m+j+p-1}^{m,n-j})\\[6pt] 
& & T_{i+m}^{j-i-1} & {\bf H}(R_{i+m,m+j-1}^{j-i-1,p}) & R_{i+m,m+j+p-1 }^{j-i-1,n-j}\\[6pt]
& & & 1 & {\bf V}(R_{m+j-1,m+j+p-1}^{p,n-j})\\[6pt] 
& & & & T_{m+j+p-1}^{n-j}\end{pmatrix} g(T_i^m) h(T_{m+j-1}^p)$$
Since $A$ is commutative, we are done. 

Next we check identity (\ref{Odef2}), i.e. $(f\circ_ig)\circ_{i+j-1}h=f\circ_i(g\circ_jh),\text{ for }1\leq i\leq n \text{ and } 1\leq j \leq m.$ Now write a general element as 
$$T=\begin{pmatrix} T_1^{i-1} & R_{1,i}^{i-1,j-1} & R_{1,i+j-1}^{i-1,p}& R_{1,i+j+p-1}^{i-1,m-j}& R_{1,i+m+p-1}^{i-1,n-i} \\[6pt] 
& T_i^{j-1} & R_{i,i+j-1}^{j-1,p}& R_{i,i+j+p-1}^{j-1,m-j} & R_{i,i+m+p-1}^{j-1,n-i}\\[6pt] 
& & T_{i+j-1}^p & R_{i+j-1,i+j+p-1}^{p,m-j} & R_{i+j-1,i+m+p-1}^{p,n-i}\\[6pt]
& & & T_{i+j+p-1}^{m-j} & R_{i+j+p-1,i+m+p-1}^{m-j,n-i }\\[6pt] 
& & & & T_{i+m+p-1}^{n-i}\end{pmatrix}.$$ 

Then $$((f\circ_ig)\circ_{i+j-1}h)(T)=$$
 $$(f\circ_ig)\begin{pmatrix} T_1^{i-1} & R_{1,i}^{i-1,j-1} & {\bf H}(R_{1,i+j-1}^{i-1,p}) & R_{1,i+j+p-1}^{i-1,m-j}& R_{1,i+m+p-1}^{i-1,n-i} \\[6pt] 
 & T_i^{j-1} & {\bf H}(R_{i,i+j-1}^{j-1,p}) & R_{i,i+j+p-1}^{j-1,m-j} & R_{i,i+m+p-1}^{j-1,n-i}\\[6pt] 
 & & 1 & {\bf V}(R_{i+j-1,i+j+p-1}^{p,m-j}) & {\bf V}(R_{i+j-1,i+m+p-1}^{p,n-i})\\[6pt]
 & & & T_{i+j+p-1}^{m-j} & R_{i+j+p-1,i+m+p-1}^{m-j,n-i }\\[6pt] 
 & & & & T_{i+m+p-1}^{n-i}\end{pmatrix} h(T_{i+j-1}^p)=$$
$$f\begin{pmatrix} T_1^{i-1} & {\bf H}(R_{1,i}^{i-1,m+p-1}) & R_{1,i+m+p-1}^{i-1,n-i} \\[6pt] 
& 1 & {\bf V}(R_{i,i+m+p-1}^{m+p-1,n-i}) \\[6pt] 
& & T_{i+m+p-1}^{n-i}\end{pmatrix} g\begin{pmatrix} T_i^{j-1} & {\bf H}(R_{i,i+j-1}^{j-1,p}) & R_{i,i+j+p-1}^{j-1,m-j} \\[6pt] 
& 1 & {\bf V}(R_{i+j-1,i+j+p-1}^{p,m-j}) \\[6pt] 
& & T_{i+j+p-1}^{m-j}\end{pmatrix} h(T_{i+j-1}^p).$$

On the other hand, $$(f\circ_i(g\circ_jh))(T)=$$ 
$$f\begin{pmatrix} T_1^{i-1} & {\bf H}(R_{1,i}^{i-1,m+p-1}) & R_{1,i+m+p-1}^{i-1,n-i} \\[6pt] 
& 1 & {\bf V}(R_{i,i+m+p-1}^{m+p-1,n-i}) \\[6pt] 
& & T_{i+m+p-1}^{n-i}\end{pmatrix}(g\circ_jh)(T_i^{m+p-1})=$$
$$f\begin{pmatrix} T_1^{i-1} & {\bf H}(R_{1,i}^{i-1,m+p-1}) & R_{1,i+m+p-1}^{i-1,n-i} \\[6pt] 
& 1 & {\bf V}(R_{i,i+m+p-1}^{m+p-1,n-i}) \\[6pt] 
& & T_{i+m+p-1}^{n-i}\end{pmatrix}(g\circ_jh)
\begin{pmatrix} T_i^{j-1} & R_{i,i+j-1}^{j-1,p} & R_{i,i+j+p-1}^{j-1,m-j} \\[6pt] 
& T_{i+j-1}^{p} & R_{i+j-1,i+j+p-1}^{p,m-j} \\[6pt] 
& & T_{i+j+p-1}^{m-j}\end{pmatrix}=$$
$$f\begin{pmatrix} T_1^{i-1} & {\bf H}(R_{1,i}^{i-1,m+p-1}) & R_{1,i+m+p-1}^{i-1,n-i} \\[6pt] 
& 1 & {\bf V}(R_{i,i+m+p-1}^{m+p-1,n-i}) \\[6pt] 
& & T_{i+m+p-1}^{n-i}\end{pmatrix}
g\begin{pmatrix} T_i^{j-1} & {\bf H}(R_{i,i+j-1}^{j-1,p}) & R_{i,i+j+p-1}^{j-1,m-j} \\[6pt] 
& 1 & {\bf V}(R_{i+j-1,i+j+p-1}^{p,m-j}) \\[6pt] 
& & T_{i+j+p-1}^{m-j}\end{pmatrix}  h(T_{i+j-1}^p),$$
which completes the proof.
\end{proof}


\begin{remark}
Let $A$ be a commutative algebra.  For $f\in C^n_{S^2}(A,A)$ and $g\in C^m_{S^2}(A,A)$, set $f\circ g=\sum\limits_{i=1}^{n}(-1)^{(i-1)(m-1)}f\circ_ig$.  Then the bracket 
$$[f,g]=f\circ g- (-1)^{(n-1)(m-1)}g\circ f,$$ 
defines a graded Lie algebra on $\bigoplus\limits_{n\geq1} C^n_{S^2}(A,A)$ (see \cite{g1}).
\end{remark}

\begin{lemma}
The operad $\{C^n_{S^2}(A,A)\}_{n\geq 1}$ is multiplicative with the map ${\mathfrak m}\in C^2_{S^2}(A,A)$ defined by ${\mathfrak m}\begin{pmatrix}1 & a\\ & 1 \end{pmatrix}=a.$  In particular, we have a homotopy G-algebra structure on $\bigoplus\limits_{n\geq1} C^n_{S^2}(A,A)$.
\end{lemma}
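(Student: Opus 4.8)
The plan is to check the single condition ${\mathfrak m}\circ{\mathfrak m}=0$ that defines a multiplication on the operad $\{C^n_{S^2}(A,A)\}_{n\geq 1}$; the homotopy $G$-algebra structure is then an immediate consequence of Theorem \ref{th1}. By the description of the operadic $\circ$ recalled in the preceding Remark, ${\mathfrak m}\circ{\mathfrak m}=\sum_{i=1}^{2}(-1)^{(i-1)(2-1)}{\mathfrak m}\circ_i{\mathfrak m}={\mathfrak m}\circ_1{\mathfrak m}-{\mathfrak m}\circ_2{\mathfrak m}$, an element of $C^3_{S^2}(A,A)=Hom_k(A^{\otimes 3},A)$, so it suffices to evaluate the two terms on a general $3\times 3$ tensor matrix $T$ with above-diagonal entries $a_{1,2},a_{1,3},a_{2,3}$ and see that they cancel.

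First I would compute ${\mathfrak m}\circ_1{\mathfrak m}$ from formula (\ref{circlei}) with $n=m=2$ and $i=1$. Since $i-1=0$, the sub-tensors $T_1^{i-1}$, $R_{1,i}^{i-1,m}$, $R_{1,i+m}^{i-1,n-i}$ all have zero rows and disappear, and the right-hand side of (\ref{circlei}) reduces to ${\mathfrak m}$ evaluated on the $2\times 2$ triangular tensor whose unique above-diagonal entry is ${\bf V}(R_{1,3}^{2,1})=a_{1,3}a_{2,3}$, multiplied by ${\mathfrak m}(T_1^2)=a_{1,2}$; hence $({\mathfrak m}\circ_1{\mathfrak m})(T)=a_{1,3}a_{2,3}\,a_{1,2}$. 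Symmetrically, taking $i=2$ kills the right-hand empty pieces (now $n-i=0$), the operator ${\bf H}$ collapses the one-row rectangle $R_{1,2}^{1,2}$ to $a_{1,2}a_{1,3}$, and one obtains $({\mathfrak m}\circ_2{\mathfrak m})(T)={\mathfrak m}\begin{pmatrix}1 & a_{1,2}a_{1,3}\\ & 1\end{pmatrix}{\mathfrak m}(T_2^2)=a_{1,2}a_{1,3}\,a_{2,3}$. Because $A$ is commutative these two expressions coincide, so ${\mathfrak m}\circ{\mathfrak m}=0$ and ${\mathfrak m}$ is a multiplication on the operad. Theorem \ref{th1} then yields a homotopy $G$-algebra structure on $\bigoplus_{n\geq 1}C^n_{S^2}(A,A)$, with cup product (\ref{cdot}) and differential (\ref{diff}).

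I do not expect a genuine obstacle here: the whole argument is a short direct substitution into (\ref{circlei}). The only point that needs a little care is the bookkeeping of degenerate sub-tensors when an index range is empty (namely $i-1=0$ or $n-i=0$), together with keeping straight that ${\bf H}$ and ${\bf V}$ act as the identity on a rectangle consisting of a single row, respectively a single column. It would also be natural to record here, for use in the next section, that the differential $d$ of (\ref{diff}) agrees up to the usual sign normalization with the Hochschild-type differential $\delta_n^{S^2}$ of (\ref{deltaS}): the term ${\mathfrak m}\circ_1 f$ reproduces the first term of (\ref{deltaS}), the terms $f\circ_i{\mathfrak m}$ for $1\leq i\leq n$ reproduce the alternating sum of the middle terms (in which $a_{i,i+1}$ is pulled out as a coefficient), and $(-1)^{|f|}{\mathfrak m}\circ_2 f$ reproduces the last term; this is again a direct computation from the definition of $\circ_i$.
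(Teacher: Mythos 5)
Your proposal is correct and follows essentially the same route as the paper: a direct evaluation of ${\mathfrak m}\circ_1{\mathfrak m}$ and ${\mathfrak m}\circ_2{\mathfrak m}$ on a general element of $A^{\otimes 3}$, yielding $(a_{1,3}a_{2,3})a_{1,2}-(a_{1,2}a_{1,3})a_{2,3}=0$ by commutativity, followed by an appeal to Theorem \ref{th1}. Your closing observation that $d$ agrees with $\delta_n^{S^2}$ up to sign is not needed for this lemma but is exactly the computation the paper carries out in the proof of the subsequent theorem.
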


\begin{proof}
We have
\begin{eqnarray*} 
{\mathfrak m}\circ{\mathfrak m}(\otimes \begin{pmatrix}1 & a&b\\
&1&c\\ 
&& 1 \end{pmatrix})&=&{\mathfrak m}(\otimes \begin{pmatrix}1 & bc\\
&1\\ \end{pmatrix}){\mathfrak m}(\otimes \begin{pmatrix}1 & a\\
&1\\ \end{pmatrix})-{\mathfrak m}(\otimes \begin{pmatrix}1 & ab\\
&1\\ \end{pmatrix}){\mathfrak m}(\otimes \begin{pmatrix}1 & c\\
&1\\ \end{pmatrix})\\
&=&(bc)a-(ab)c=0\\
\end{eqnarray*}
The second part follows from Theorem \ref{th1} (or \cite{gv}). 
\end{proof}

\begin{theorem}
For an commutative algebra $A$, we have a $G$-algebra structure on the higher order Hochschild cohomology of $A$ over $S^2$.
\end{theorem}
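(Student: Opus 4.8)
The plan is to derive this theorem as an immediate consequence of the machinery already assembled in this section. By the first Lemma of this section, the maps $\circ_i$ from (\ref{circlei}) equip $\{C^n_{S^2}(A,A)\}_{n\geq 1}$ with the structure of a non-symmetric unital operad. By the second Lemma, this operad is multiplicative, with multiplication given by the element ${\mathfrak m}\in C^2_{S^2}(A,A)$, ${\mathfrak m}(\otimes\begin{pmatrix}1 & a\\ & 1\end{pmatrix})=a$, which satisfies ${\mathfrak m}\circ{\mathfrak m}=0$. Theorem \ref{th1} (from \cite{gv}) then produces a homotopy $G$-algebra structure on $\bigoplus_{n\geq 1}C^n_{S^2}(A,A)$, with cup product given by (\ref{cdot}) and differential $d$ given by (\ref{diff}).

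The next step is to identify the differential $d$ of this homotopy $G$-algebra with the Hochschild-type differential $\delta^{S^2}$ of the complex $C^*_{S^2}(A,A)$, possibly up to a sign or degree shift. Concretely, I would expand $dx={\mathfrak m}\circ x-(-1)^{|x|}x\circ{\mathfrak m}$ using the definition of $\circ$ as the alternating sum $\sum_i(-1)^{(i-1)(m-1)}(\cdot)\circ_i(\cdot)$ and the explicit formula (\ref{circlei}) for $\circ_i$ with ${\mathfrak m}$ in the relevant slot. The term ${\mathfrak m}\circ_1 x$ should reproduce the first (leftmost-column) term of $\delta^{S^2}_n$ in (\ref{deltaS}), the term ${\mathfrak m}\circ_2 x$ the last term, while the $x\circ_i{\mathfrak m}$ terms, where ${\mathfrak m}$ is inserted into $x$ at position $i$, should reproduce the middle collapsing terms $(-1)^i a_{i,i+1}f(\dots)$. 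Matching the signs and the combinatorics here is the only real content of the argument. Once $H^*_{S^2}(A,A)=H^*(\bigoplus_n C^n_{S^2}(A,A),d)$ is established, the general principle (again from \cite{gv}, and encoded in identities (\ref{jacobi}) and (\ref{gradcom})) that a homotopy $G$-algebra structure descends to an honest $G$-algebra structure on its cohomology applies: the cup product becomes graded commutative on cohomology by (\ref{gradcom}), the bracket satisfies the graded Jacobi identity by the corresponding identity in \cite{gv}, and the Leibniz/Poisson compatibility follows from (\ref{jacobi}), since all the correction terms on the right-hand sides are coboundaries and hence vanish in cohomology.

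I would then simply state that, combining these observations, $H^*_{S^2}(A,A)$ inherits a $G$-algebra structure whose cup product and bracket are induced by (\ref{cdot}) and by the bracket of the Remark preceding the second Lemma. It is worth adding a sentence confirming that the cup product descends to cohomology, i.e. that $d$ is a derivation for $\cdot$ up to homotopy so that $\cdot$ is well defined on $H^*_{S^2}(A,A)$ — this is part of what it means for $\bigoplus_n C^n_{S^2}(A,A)$ to be a homotopy $G$-algebra, so it requires no new work.

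The main obstacle I anticipate is the bookkeeping in the identification $d=\pm\delta^{S^2}$: one has to check that inserting ${\mathfrak m}$ into the tensor-matrix slots via ${\bf H}$ and ${\bf V}$ exactly implements the "collapse two adjacent rows and columns, multiplying the entries" operation of (\ref{deltaS}), and that the operadic signs $(-1)^{(i-1)(m-1)}$ with $m=2$ together with the sign $-(-1)^{|x|}$ in (\ref{diff}) reassemble into the alternating signs $(-1)^i$ and the boundary signs $(-1)^{n+1}$ appearing there. This is a finite, essentially mechanical verification, but it is the step where an error would most easily creep in, so I would carry it out carefully in low degrees ($n=2,3$) first to fix conventions before writing the general case.
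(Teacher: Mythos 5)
Your proposal follows essentially the same route as the paper: invoke the two lemmas and Theorem \ref{th1} to get the homotopy $G$-algebra on $\bigoplus_n C^n_{S^2}(A,A)$, verify $(-1)^{n-1}d(f)=\delta^{S^2}_n(f)$ by expanding ${\mathfrak m}\circ f$ and $f\circ{\mathfrak m}$ against (\ref{deltaS}), and then descend to cohomology via (\ref{jacobi}) and (\ref{gradcom}). One small bookkeeping correction for when you carry out the verification: it is ${\mathfrak m}\circ_1 f$ that yields the last term $\prod_j a_{j,n+1}\,f(T_1^n)$ of (\ref{deltaS}) and ${\mathfrak m}\circ_2 f$ that yields the first term $f(T_2^n)\prod_j a_{1,j}$, the reverse of what you wrote.
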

\begin{proof}
From Theorem \ref{th1}, we know that $\mathfrak{m}$ gives a differential $d$ on $\{C_{S^2}^n(A,A)\}_{n\geq1}$, namely $$d(f)=\mathfrak{m}\circ f - (-1)^{n-1}f\circ\mathfrak{m}.$$ We also have the differential $\delta^{S^2}_n$ that defines the higher order Hochschild cohomology (see (\ref{deltaS})). We want to show $(-1)^{n-1}d(f)=\delta^{S^2}_n(f)$. 


Take $T\in A^{\otimes\frac{n(n-1)}{2}}$. First note that,
$$(\mathfrak{m}\circ f)(T)=$$
$$(-1)^{(1-1)(n-1)}\mathfrak{m}
\begin{pmatrix}  & 1 & {\bf V}(R_{1,1+n}^{n,1}) \\[6pt] 
& & 1\end{pmatrix}
f(T_1^{n})+ (-1)^{(2-1)(n-1)}\mathfrak{m}
\begin{pmatrix} 1 & {\bf H}(R_{1,2}^{1,n})  \\[6pt] 
& 1 \end{pmatrix} f(T_2^{n})=$$ 
$$\prod_{i=1}^{n}a_{i,n+1}f(T_1^n)+(-1)^{n-1}\prod_{i=2}^{n+1} a_{1,i}f(T_2^n).$$

Also, $$-(f\circ\mathfrak{m})(T)=-\sum\limits_{i=1}^{n}(-1)^{(i-1)(2-1)}(f\circ_i\mathfrak{m})(T)=$$
 $$(-1)^{i}\sum\limits_{i=1}^{n}f\begin{pmatrix} T_1^{i-1} & {\bf H}(R_{1,i}^{i-1,2}) & R_{1,i+2}^{i-1,n-i} \\[6pt] 
 & 1 & {\bf V}(R_{i,i+2}^{2,n-i}) \\[6pt] 
 & & T_{i+2}^{n-i}\end{pmatrix}\mathfrak{m}(T_i^{2})=$$ 
$$ (-1)^{i} \sum\limits_{i=1}^{n}a_{i,i+1} f\begin{pmatrix} T_1^{i-1} & {\bf H}(R_{1,i}^{i-1,2}) & R_{1,i+2}^{i-1,n-i} \\[6pt] 
& 1 & {\bf V}(R_{i,i+2}^{m,n-i}) \\[6pt] 
& & T_{i+2}^{n-i}\end{pmatrix}.$$

And so

$$(-1)^{n-1}d(f)(T)=(-1)^{n-1}((\mathfrak{m}\circ f-(-1)^{n-1}f\circ\mathfrak{m})(T))=$$
$$(-1)^{n-1}\prod_{i=1}^{n}a_{i,n+1} f(T_1^n)+\prod_{i=2}^{n+1} a_{1,i} f(T_2^n)+$$
$$(-1)^{i} \sum\limits_{i=1}^{n}a_{i,i+1} f\begin{pmatrix} T_1^{i-1} & {\bf H}(R_{1,i}^{i-1,2}) & R_{1,i+2}^{i-1,n-i} \\[6pt] 
& 1 & {\bf V}(R_{i,i+2}^{m,n-i}) \\[6pt] 
& & T_{i+2}^{n-i}\end{pmatrix}=\delta^{S^2}_n(f)(T).$$

Since the sign of the differential doesn't matter in homology, the $G$-algebra structure on the higher order Hochschild cohomology of $A$ over $S^2$ follows from the homotopy $G$-algebra structure already established on $\{C_{S^2}^n(A,A)\}_{n\geq 1}$.  For example Equation  (\ref{jacobi}) shows the graded Lebiniz compatibility,  while Equation (\ref{gradcom}) demonstrates that the cup product is graded commutative. One can easily check the associativity of the cup product. Finally the Jacobi identity follows from \cite{g1}. \end{proof}

\begin{remark}  Ginot introduced in \cite{gi} a $d+1$-algebra structure on $H_{S^d}^*(A,A)$. Essentially this means that there is a cup product and a bracket of degree $d$ on $H_{S^d}^*(A,A)$. In the case of the sphere $S^2$ this means that the bracket is of degree $2$ (i.e.  $[.,.]_3:H_{S^2}^m(A,A)\otimes H_{S^2}^n(A,A)\to H_{S^2}^{m+n-2}(A,A)$), which obviously is not the case in this paper. Moreover it is not clear if there is any relation with our results. 
\end{remark}

\begin{remark} The results in this paper are in the spirit of those obtained in \cite{sta} and \cite{s3}, however they are not particular cases of those constructions. 
\end{remark}

\bibliographystyle{amsalpha}

\begin{thebibliography}{A}


\bibitem
{cl}
S. Carolus,  and J. Laubacher, 
\textit{Simplicial Structures for Higher Order Hochschild Homology over the $d$-Sphere}, 
arXiv:1707.03863

\bibitem
{g1}
M. Gerstenhaber, \textit{The Cohomology Structure of an Associative Ring}, Ann. of Math. (2) {\bf 78} (1963), 267--288.

\bibitem
{g2}
M. Gerstenhaber, \textit{On the Deformation of Rings and Algebras}, Ann. of Math. (2)
{\bf 79} (1964), 57--103.


\bibitem
{gv}
M. Gerstenhaber, and A. A. Voronov, \textit{Homotopy G-algebras and Moduli Space Operad}, International Mathematics Research Notice  {\bf 3} (1995), 141--153.

\bibitem
{gi}
G. Ginot, \textit{Higher order Hochschild Cohomology}, C. R. Math. Acad. Sci. Paris, {\bf 346} (2008), 5-10.


\bibitem
{h} G. Hochschild, \textit{On the Cohomology Groups of an Associative Algebra}, Ann. of Math. (2) {\bf 46} (1945), 58--67.


\bibitem
{kp} A. N. Koam,  and T. Pirashvili, \textit{Cohomology of oriented algebras}, 
Communications in Algebra, {\bf 46}, 2947--2963 (2018).




\bibitem
{l2}
J. Laubacher, \textit{Secondary Hochschild and Cyclic (Co)homologies},  Ph.D thesis, (2017).


\bibitem
{lo} J. L. Loday, \textit{Cyclic Homology}, Springer-Verlag, Grundlehren der mathematischen Wissenschaften, {\bf 301} (1992).


\bibitem
{mark} M. Markl, S. Shnider, and J. Stasheff, \textit{Operads in Algebra, Topology and Physiscs}, Mathematical Surveys and Monographs {\bf 96} (2002).


\bibitem
{p} T. Pirashvili, \textit{Hodge decomposition for higher order Hochschild homology}, Ann. Sci. Ecole Norm. Sup., (4) {\bf 33},
 151-179 (2000).



\bibitem
{sta} M. D. Staic, \textit{Secondary Hochschild Cohomology}, Algebras and Representation Theory,  {\bf 19} Issue 1 (2016), pp 47-56.

\bibitem
{s3} M. D. Staic, and A. Stancu, 
\textit{Operations on the Secondary  Hochschild  Cohomology},  Homology, Homotopy and Applications,  {\bf 17} (2015), 129-146.


\end{thebibliography}

\end{document}